\theoremstyle{plain}\newtheorem{Theorem}{Theorem}
\theoremstyle{plain}
\theoremstyle{plain}
\theoremstyle{plain}\newtheorem{Lemma}[Theorem]{Lemma}
\theoremstyle{plain}\newtheorem{Proposition}[Theorem]{Proposition}
\theoremstyle{definition}
\theoremstyle{definition}
\theoremstyle{definition}
\theoremstyle{definition}\newtheorem{Remark}[Theorem]{Remark}
\def\N{{\mathbb N}}
           \def\tenk{\otimes_k}     
\def\Br{\mathrm{Br}}             
\def\dim{\mathrm{dim}}           \def\tenkP{\otimes_{kP}}
\def\Id{\mathrm{Id}}             
\def\Im{\mathrm{Im}}             
\def\Ind{\mathrm{Ind}}
\def\Res{\mathrm{Res}}           
\def\Tr{\mathrm{Tr}}             
\def\tr{\mathrm{tr}}
\title{Bounds for Hochschild cohomology of block algebras} 
\author{Radha Kessar and Markus Linckelmann} 
\date{\today}
\begin{document}

\maketitle

\begin{abstract} 
We show that for any block algebra $B$ of a finite group over 
an algebraically closed field of prime characteristic $p$ the
dimension of $HH^n(B)$ is bounded by a function depending only on
the nonnegative integer $n$ and the defect of $B$. The proof uses
in particular a theorem of Brauer and Feit which implies the result
for $n=0$.
\end{abstract}

Let $p$ be a prime and $k$ an algebraically closed field of 
characteristic $p$. Let $G$ be a finite group and $B$ a block 
algebra of $kG$; that is, $B$ is an indecomposable direct factor 
of $kG$ as a $k$-algebra. A {\it defect group of $B$} is a minimal 
subgroup $P$ of $G$ such that $B$ is isomorphic to a direct 
summand of $B\tenkP B$ as a $B$-$B$-bimodule. The defect groups of 
$B$ form a $G$-conjugacy class of $p$-subgroups of $G$, and the 
{\it defect of} $B$ is the integer $d(B)$ such that $p^{d(B)}$ is
the order of the defect groups of $B$. The {\it weak Donovan 
conjecture} states that the Cartan invariants of $B$ are bounded
by a function depending only on the defect $d(B)$ of $B$. As a 
consequence of a theorem of Brauer and Feit \cite{BrFe59}, the 
number of isomorphism classes of simple $B$-modules is bounded 
by a function depending only on $d(B)$. 
Thus the weak Donovan conjecture would imply that the dimension of 
a basic algebra of $B$ is bounded by a function depending on $d(B)$. 
This in turn would imply that the dimension of the term in any fixed 
degree $n$  of the Hochschild complex of a basic algebra of $B$ is 
bounded by a function depending on $n$ and $d(B)$; since Hochschild 
cohomology is invariant under Morita equivalences, we would thus get 
that the dimension of $HH^n(B)$ is bounded by a function depending 
on $n$ and $d(B)$. The purpose of this note is to show that this 
consequence of the weak Donovan conjecture does indeed hold. 

\begin{Theorem} \label{HHbound}
There is a function $f : \N_0\times \N_0\to \N_0$
such that for any integer $n\geq 0$, any finite group $G$
and any block algebra $B$ of $kG$ with defect $d$  we have
$$\dim_k(HH^n(B)) \leq f(n,d)$$
\end{Theorem}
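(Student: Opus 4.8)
The plan is to reduce the computation to local data on a defect group and its fusion system, and then to bound that data uniformly using the theorem of Brauer and Feit. Since Hochschild cohomology is invariant under Morita equivalence, I would first replace $B$ by a source algebra $A=iBi$ for a source idempotent $i$ associated with a defect group $P$ of order $p^d$, so that $HH^n(B)\cong HH^n(A)$ and it suffices to bound $\dim_k HH^n(A)$. The algebra $A$ is an interior $P$-algebra which, as a module over $k[P\times P]$ acting by left and right translation, is a $p$-permutation module, and by the defining property of the defect group $A$ is isomorphic to a direct summand of $A\tenkP A$ as an $A$-$A$-bimodule. This relative $\Delta P$-projectivity makes the restriction map split injective, so that $HH^*(A)=\Ext^*_{A^e}(A,A)$ is a direct summand of $\Ext^*_{k\Delta P}(A,A)=H^*(P,\End_k(A))$, where $A^e=A\tenk A^{op}$ and $P$ acts on $\End_k(A)$ by conjugation through the interior structure.

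The coefficient module $\End_k(A)$ has dimension $(\dim_k A)^2$, which is \emph{not} bounded in terms of $d$, so the next step is to cut it down using the $p$-permutation structure. As a $kP$-module $\End_k(A)$ is again a $p$-permutation module, and I would analyse $H^*(P,\End_k(A))$ through the Brauer construction $\Br_{\Delta Q}$ at the diagonal subgroups $\Delta Q=\{(u,u):u\in Q\}$ for $Q\le P$. The associated trace maps express the summand $HH^*(A)$ as a subquotient of a finite direct sum, indexed by $\CF$-conjugacy classes of subgroups $Q\le P$ (where $\CF$ is the fusion system of $B$ on $P$), of cohomology groups of the shape $H^*(\Aut_\CF(Q),V_Q)$. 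Here $\Aut_\CF(Q)\le\Aut(Q)$ is the local automorphism group and the coefficient $V_Q$ is built from the Brauer quotient $\End_k(A)(\Delta Q)$, which is controlled by the source algebra $A(\Delta Q)$ of the Brauer correspondent block $b_Q$ of $N_G(Q)$, a block of defect at most $d$.

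It then remains to bound each ingredient by a function of $n$ and $d$ alone. For fixed $d$ there are, up to isomorphism, only finitely many $p$-groups of order $p^d$, and on each of these only finitely many saturated fusion systems; hence the number of $\CF$-conjugacy classes of subgroups $Q$ and the orders of all the groups $\Aut_\CF(Q)$ (which divide $|\Aut(Q)|$) are bounded in terms of $d$. For any finite group $\Gamma$ and any module $V$ the bar resolution gives the crude estimate $\dim_k H^n(\Gamma,V)\le|\Gamma|^n\dim_k V$, so once the dimensions $\dim_k V_Q$ are bounded in terms of $d$, summing the finitely many local contributions yields a bound of the required form $f(n,d)$.

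The heart of the matter, and the step I expect to be the main obstacle, is precisely the bound on the coefficient modules $V_Q$: since $\dim_k A$ is unbounded (its being bounded is the still open weak Donovan conjecture), one must show that the genuinely large part of $A$ contributes nothing to $HH^n$. This is where the theorem of Brauer and Feit is used. The point is that $V_Q$ is governed by the local block $b_Q$, and that what enters $H^n(\Aut_\CF(Q),V_Q)$ are bounded invariants of $b_Q$ --- such as the number $l(b_Q)$ of simple modules and $\dim_k Z(b_Q)$ --- rather than the full dimension of its source algebra. Since every $b_Q$ has defect at most $d$, the Brauer--Feit bound on the number of irreducible characters bounds $k(b_Q)$, and hence $l(b_Q)$ and $\dim_k Z(b_Q)$, uniformly in $d$; this is exactly the input that already settles the case $n=0$, where $HH^0(A)=Z(A)\cong Z(B)$. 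Turning this heuristic into a precise reduction of the modules $V_Q$ to such bounded local invariants in every degree is the technical core on which the whole argument rests.
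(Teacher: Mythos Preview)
Your decomposition via Brauer pairs is in the right spirit and parallels the paper's Proposition~\ref{HHsumtransfer}. (Minor point: relative $\Delta P$-projectivity splits $HH^*(A)$ off $H^*(P;A)$ with conjugation action, not $H^*(P;\End_k(A))$; but this changes nothing, since $\dim_k A$ is already unbounded.) The genuine gap is exactly the step you yourself flag as ``the technical core''. You reach terms of the shape $H^n(\Gamma,V_Q)$ with $|\Gamma|$ bounded in terms of $d$ but $\dim_k V_Q$ unbounded, and hope that only invariants such as $l(b_Q)$ or $\dim_k Z(b_Q)$ actually contribute. As stated this is circular: controlling those cohomology groups in terms of such invariants of the local block $b_Q$ is tantamount to bounding $HH^n(b_Q)$, and $b_Q$ still has defect up to $d$, so nothing has been reduced. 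The bar-resolution estimate $|\Gamma|^n\dim_k V_Q$ is useless here, and Brauer--Feit gives no direct handle on higher cohomology with unbounded coefficients.

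The missing idea is an \emph{induction on the defect}. The paper bounds each local contribution not by some $H^n(\Aut_\CF(Q),V_Q)$ but by $\dim_k HH^n(kQC_G(Q)e)$ for a nontrivial Brauer pair $(Q,e)$ (Lemma~\ref{HHQe}), so one is back to Hochschild cohomology of a block --- but now of the group $QC_G(Q)$, which has a nontrivial central $p$-subgroup $Z_Q\le Z(Q)$. A Lyndon--Hochschild--Serre argument (Lemma~\ref{HHZ}) then yields $\dim_k HH^n(kQC_G(Q)e)\le p\sum_{i=0}^n \dim_k HH^i(k(QC_G(Q)/Z_Q)\bar e)$, where the quotient block $\bar e$ has defect at most $d-1$. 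Summing over at most $c(d)$ conjugacy classes of nontrivial Brauer pairs closes the recursion. Brauer--Feit is used only to anchor the induction at $n=0$; it is not used to bound any coefficient module $V_Q$ directly. Without this defect-reducing step via a central $p$-subgroup, your argument has no recursion and does not terminate.
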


For $n=0$ this follows from the aforementioned theorem of Brauer 
and Feit \cite{BrFe59}, since $HH^0(B)\cong$ $Z(B)$. 
Using Tate duality, the theorem above extends to Tate cohomology for
negative $n$. 
A result of K\"ulshammer and Robinson \cite[Theorem 1]{KuRo}
implies that it suffices to show theorem \ref{HHbound} for
finite groups with a non-trivial normal $p$-subgroup. We
follow a slightly different strategy in the proof below,
reducing the problem directly to finite groups with a non-trivial 
central $p$-subgroup.

\begin{Remark} \label{fdef}
We make no effort to construct a best possible bound; 
we define the function $f$ in theorem \ref{HHbound} inductively 
as follows: we set $f(0,0)=1$, $f(n,0)=0$ for $n>0$;
for all $d > 0$, $f(0,d)$ is the largest integer less or equal to
the bound $\frac{1}{4}p^{2d}+1$ given in the Brauer-Feit theorem,
and for $n>0$, $d>0$ we set
$$f(n,d) = p\cdot c(d)\cdot \sum_{i=0}^n\ f(i,d-1)$$
where $c(d)$ is the maximum of the numbers of subgroups in any
finite group of order $p^d$.
\end{Remark}

Let $G$ be a finite group and $U$ a $kG$-module. We denote
as usual by $U^G$ the subspace of $G$-fixed points in $U$.
If $H$ is a subgroup of $G$ then $U^G\subseteq$ $U^H$,
and there is a {\it trace map} $\tr^G_H : U^H\to$ $U^G$
sending $u\in$ $U^H$ to $\sum_{x\in[G/H]}xu$, where $[G/H]$
is a set of representatives of the $H$-cosets in $G$; one checks
that this map is independent of the choice of $[G/H]$ and that
its image, denoted $U^G_H$, is contained in $U^G$. For
$Q$ a $p$-subgroup of $G$, we denote the Brauer construction
of $U$ with respect to $Q$ by $U(Q)=$ $U^Q/\sum_{R; R<Q}\ U^Q_R$
and by $\Br_Q^U:U^Q\to$ $U(Q)$ the canonical surjection, called
{\it Brauer homomorphism}. A block algebra $B$ of $kG$ can be
viewed as an indecomposable $k(G\times G)$-module, with
$(x,y)\in$ $G\times G$ acting by left multiplication with $x$
and right multiplication with $y^{-1}$. For $H$ a subgroup of $G$, we
denote by $\Delta H$ the `diagonal' subgroup 
$\Delta H=$ $\{(h,h)\ |\ h\in H\}$ in $G\times G$. 
In particular, the action of $\Delta G$ on $B$ can be
identified with the conjugation action of $G$ on $B$.
The Brauer construction applied to $B$ with respect
to $\Delta Q$ is canonically isomorphic to $kC_G(Q)c$, where
$Q$ is a $p$-subgroup of $G$ and $c=$ $\Br_{\Delta Q}(1_B)$. 
A $B$-{\it Brauer pair} is a pair $(Q,e)$ consisting of a 
$p$-subgroup $Q$ of $G$ and of a block idempotent $e$ of
$kC_G(Q)$ satisfying $e\Br_Q(1_B)=$ $e$. The set of $B$-Brauer
pairs is a $G$-poset in which the maximal pairs are all conjugate.
The maximal $B$-Brauer pairs are exactly the $B$-Brauer pairs
$(Q,e)$ for which $Q$ is a defect group of $B$.
See \cite{AlBr} and \cite[\S 11, \S 40]{Thev} for details.
In what follows we use without further comment the canonical
graded isomorphism $HH^*(B)\cong$ $H^*(\Delta G;B)$; see
\cite[(3.2)]{SiWi}.
The following result is certainly well-known but not
always stated in exactly the form we need it; we therefore
give a proof for the convenience of the reader.

\begin{Proposition} \label{blocksummands}
Let $G$ be a finite group, $B$ be a block algebra of $kG$
and $Q$ a $p$-subgroup of $G$. Set $b=$ $1_B$ and $c=$
$\Br_Q(b)$. Suppose that $c\neq 0$ and set $B_Q=$
$kC_G(Q)cb$. Then we have a direct sum decomposition of 
$kN_{G\times G}(\Delta Q)$-modules
$$\Res^{G\times G}_{N_{G\times G}(\Delta Q)}(B) = B_Q\oplus C_Q$$
such that multiplication by $b$ is an isomorphism of
$kN_{G\times G}(\Delta Q)$-modules $kC_G(Q)c\cong$ $B_Q$
and such that $C_Q(\Delta Q)=$ $\{0\}$.
\end{Proposition}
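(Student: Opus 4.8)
The plan is to realize $B_Q$ as the image of an explicit, $kN$-linearly split injection $kC_G(Q)c\to B$, where I abbreviate $N=N_{G\times G}(\Delta Q)$, and then to force $C_Q(\Delta Q)=\{0\}$ by a dimension count against the canonical isomorphism $B(\Delta Q)\cong kC_G(Q)c$ recalled above. First I would record that $(x,y)$ lies in $N$ exactly when $x,y\in N_G(Q)$ and $xy^{-1}\in C_G(Q)$, i.e.\ when $x$ and $y$ induce the same automorphism of $Q$; in particular $\Delta Q\trianglelefteq N$, and conjugation by $\Delta Q$ fixes every element of $C_G(Q)$, so $kC_G(Q)$ is a trivial $k\Delta Q$-module. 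Since $\Br_Q$ is $N_G(Q)$-equivariant and $b$ is central in $kG$, the idempotent $c=\Br_Q(b)$ is fixed under $N_G(Q)$-conjugation; from this one checks that the two-sided action $(x,y)\cdot v=xvy^{-1}$ sends $C_G(Q)$ into itself and fixes $c$, so $kC_G(Q)c$ is a $kN$-submodule of $kG$. Right multiplication $\rho_b\colon kC_G(Q)c\to B$, $v\mapsto vb$, is then $kN$-linear, because centrality of $b$ gives $xvby^{-1}=xvy^{-1}b$, and its image is precisely $B_Q=kC_G(Q)cb$.

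The heart of the argument is a $kN$-linear retraction of $\rho_b$. Let $\pi\colon kG\to kC_G(Q)$ be the projection, along the group basis $G$, onto the span of the elements lying in $C_G(Q)$; since the two-sided $N$-action permutes $G$ and preserves the subset $C_G(Q)$, the map $\pi$ is $kN$-linear, and so is $\sigma\colon B\to kC_G(Q)c$ defined by $\sigma(w)=\pi(w)c$ (using again that $c$ is $N_G(Q)$-fixed). The key computation is that $\pi(vb)=vc$ for every $v\in kC_G(Q)$: if $g\in C_G(Q)$ then $gh\in C_G(Q)$ if and only if $h\in C_G(Q)$, so the terms of $vb$ surviving $\pi$ are exactly $v$ times the $C_G(Q)$-part of $b$, which is $c$. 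Hence $\sigma\circ\rho_b=\mathrm{id}$ on $kC_G(Q)c$, so $e=\rho_b\circ\sigma$ is an idempotent $kN$-endomorphism of $B$, yielding the $kN$-module decomposition $B=B_Q\oplus C_Q$ with $C_Q=\ker\sigma$ and with $\rho_b$ restricting to an isomorphism $kC_G(Q)c\cong B_Q$.

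It remains to show $C_Q(\Delta Q)=\{0\}$, which I would settle by dimensions. The Brauer construction depends only on the restriction to $\Delta Q$ and is additive, so $B(\Delta Q)=B_Q(\Delta Q)\oplus C_Q(\Delta Q)$. As $kC_G(Q)c$ is a trivial $k\Delta Q$-module, its Brauer construction at $\Delta Q$ is the whole space — the proper relative traces act as multiplication by indices divisible by $p$ — and transporting this along $\rho_b$ gives $\dim_k B_Q(\Delta Q)=\dim_k kC_G(Q)c$. The canonical isomorphism $B(\Delta Q)\cong kC_G(Q)c$ gives the same value for $\dim_k B(\Delta Q)$, and comparing dimensions in the above decomposition forces $C_Q(\Delta Q)=\{0\}$.

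The step I expect to be the main obstacle is the construction of the retraction $\sigma$ together with the equivariance bookkeeping it rests on — that $c$ is $N_G(Q)$-conjugation invariant and that $\pi$, $\rho_b$ and $\sigma$ are all $kN$-linear — and the verification of the identity $\pi(vb)=vc$. Once $\rho_b$ is known to be a split injection of $kN$-modules, everything else is formal: the decomposition is immediate, and the vanishing of $C_Q(\Delta Q)$ reduces to additivity of the Brauer construction and the canonical isomorphism supplied in the preamble.
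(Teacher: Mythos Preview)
Your argument is correct and is genuinely different from the paper's. The paper proceeds abstractly: it invokes a result from Alperin's book that $kN_G(Q)c$ is a direct summand of $B$ as a $kN_G(Q)$-$kN_G(Q)$-bimodule, and then applies a general splitting lemma (if $\alpha:B\to A$ is an algebra map and $B$ is a bimodule summand of $A$, then $\alpha$ itself is split injective) to the map ``multiplication by $b$'' from $kN_G(Q)c$ into $cBc$; the passage to $kC_G(Q)c$ is then a further restriction. You instead build an explicit $kN$-linear retraction by hand, using the coefficient projection $\pi:kG\to kC_G(Q)$ along the group basis and the observation that $\Br_Q(b)=\pi(b)=c$, so that $\pi(vb)=vc$ for $v\in kC_G(Q)$. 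This is more elementary and entirely self-contained---no external citation and no abstract lemma are needed---while the paper's route is more structural and situates the statement within the general framework of relative projectivity and adjunction. For the vanishing $C_Q(\Delta Q)=\{0\}$ both arguments are essentially the same: additivity of the Brauer construction together with $B(\Delta Q)\cong kC_G(Q)c$ forces the complement to die. One small phrasing point: the two-sided $N$-action does not literally fix $c$ as an element of $kG$ (e.g.\ $(g,1)\cdot c=gc$ for $g\in C_G(Q)$); what you actually use, and what is true, is that $ycy^{-1}=c$ for $y\in N_G(Q)$, which suffices to make $kC_G(Q)c$ an $N$-submodule and to check equivariance of $\sigma$.
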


The proof we present here uses the following well-known
lemma, which is a special case of expressing relative projectivity 
in terms of the splitting of adjunction maps (the general theme 
behind this is developed in \cite{Broue09}, \cite{Chou}, for 
instance).

\begin{Lemma} \label{adjunitsplit}
Let $\alpha : B\to$ $A$ be a homomorphism of $k$-algebras.
Suppose that $B$ is isomorphic to a direct summand of $A$ as a 
$B$-$B$-bimodule. Then $\alpha$ is injective and $\Im(\alpha)$ 
is a direct summand of $A$ as a $B$-$B$-bimodule.
\end{Lemma}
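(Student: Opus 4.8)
The plan is to make $A$ into a $B$-$B$-bimodule via $\alpha$, by setting $b\cdot a\cdot b'=\alpha(b)a\alpha(b')$ for $a\in A$ and $b,b'\in B$. With this structure $\alpha:B\to A$ becomes a homomorphism of $B$-$B$-bimodules out of the regular bimodule $B$, and $\alpha(1_B)=1_A$. Since a bimodule endomorphism $\phi$ of the regular bimodule ${}_BB_B$ satisfies $\phi(b)=\phi(b\cdot 1_B)=b\cdot\phi(1_B)$ and is thus determined by $\phi(1_B)$, it suffices to construct a homomorphism of $B$-$B$-bimodules $\rho:A\to B$ with $\rho(1_A)=1_B$: then $\rho\circ\alpha$ is the bimodule endomorphism of ${}_BB_B$ sending $1_B$ to $1_B$, whence $\rho\circ\alpha=\mathrm{id}_B$. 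This single identity yields both conclusions at once: $\alpha$ is injective, and $\alpha\circ\rho$ is an idempotent endomorphism of $A$ with image $\Im(\alpha)$, so that $A=\Im(\alpha)\oplus\ker(\rho)$ as $B$-$B$-bimodules.

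Next I would unwind the hypothesis. That $B$ is isomorphic to a direct summand of $A$ as a $B$-$B$-bimodule means precisely that there are homomorphisms of $B$-$B$-bimodules $\iota:B\to A$ and $\pi:A\to B$ with $\pi\circ\iota=\mathrm{id}_B$. Set $a_0=\iota(1_B)$. Evaluating the bimodule identity $\iota(bb')=\alpha(b)\,\iota(1_B)\,\alpha(b')$ at $b'=1_B$ and at $b=1_B$ gives $\alpha(b)a_0=\iota(b)=a_0\alpha(b)$ for all $b\in B$; thus $a_0$ lies in the centraliser $C_A(\alpha(B))$. Moreover $\pi(a_0)=\pi(\iota(1_B))=1_B$.

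The key step is to correct $\pi$ by $a_0$. Define $\rho:A\to B$ by $\rho(x)=\pi(x a_0)$. Because $a_0$ centralises $\alpha(B)$, right multiplication by $a_0$ is itself an endomorphism of $A$ as a $B$-$B$-bimodule, and therefore $\rho$ is a homomorphism of $B$-$B$-bimodules: for $b,b'\in B$ and $x\in A$ one has $\rho(\alpha(b)x\alpha(b'))=\pi(\alpha(b)\,x a_0\,\alpha(b'))=b\,\pi(xa_0)\,b'=b\,\rho(x)\,b'$. Finally $\rho(1_A)=\pi(1_A a_0)=\pi(a_0)=1_B$, which is exactly the identity required above, so $\rho\circ\alpha=\mathrm{id}_B$ and the lemma follows.

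The point I expect to be the only real obstacle is recognising that the given retraction $\pi$ cannot be used directly: there is no reason for $\pi(1_A)$ to equal $1_B$, nor even to be invertible in $Z(B)$, so $\pi\circ\alpha$ need not be an isomorphism. The device that repairs this is the twist $\rho(x)=\pi(xa_0)$, and it works precisely because $a_0=\iota(1_B)$ centralises $\alpha(B)$. This is the one place where the hypothesis that $\alpha$ is an \emph{algebra} homomorphism, rather than merely a bimodule map, enters, through the computation $\alpha(b)a_0=a_0\alpha(b)$ that makes right multiplication by $a_0$ bimodule-linear.
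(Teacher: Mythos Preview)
Your proof is correct and follows essentially the same route as the paper. Your retraction $\rho(x)=\pi(xa_0)$ is precisely the paper's composite $\pi\circ\beta$ where $\beta(a)=a\,\iota(1_B)$, and the key observation in both arguments is that $a_0=\iota(1_B)$ centralises $\alpha(B)$, making right multiplication by $a_0$ bimodule-linear; your write-up simply adds more motivation for why the twist by $a_0$ is needed.
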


\begin{proof}
The left or right action of an element $b\in$ $B$ on $A$
is given by left or right multiplication with $\alpha(b)$.
Let $\iota : B\to$ $A$ and $\pi : A\to$ $B$ be $B$-$B$-bimodule
homomorphisms satisfying $\pi\circ\iota=$ $\Id_B$. Then
$\iota(1_B)$ commutes with $\Im(\alpha)$, the map
$\beta$ sending $a\in$ $A$ to $a\iota(1_B)$ is an
$A$-$B$-bimodule endomorphism of $A$, and we have
$\beta(\alpha(b))=$ $\alpha(b)\iota(1_A)=$ $\iota(b)$, hence
$\beta\circ\alpha=$ $\iota$.   
Thus $\pi\circ\beta\circ\alpha=$ $\Id_B$, which shows that
as a $B$-$B$-bimodule homomorphism, $\alpha$ is split injective with 
$\pi\circ\beta$ as a retraction.
\end{proof}

\begin{proof}[Proof of Proposition \ref{blocksummands}]
For any block of $kN_G(Q)$ which appears in $kN_G(Q)c$, the
block $B$ of $kG$ is the corresponding `induced' block. 
By \cite[\S 14, Lemma 1]{Alpbook}, 
$kN_G(Q)c$ is isomorphic to a direct
summand of $B$ as a $kN_G(Q)$-$kN_G(Q)$-bimodule, and thus of $cBc$, 
as a $kN_G(Q)c$-$kN_G(Q)c$-bimodule. 
By lemma \ref{adjunitsplit}, multiplication by $b$ induces an
algebra homomorphism  $kN_G(Q)c\to$ $cBc$ which is split injective 
as a homomorphism of $kN_G(Q)c$-$kN_G(Q)c$-bimodules.
Since $kC_G(Q)c$ is a direct summand of
$kN_G(Q)c$ as an $N_{G\times G}(\Delta Q)$-module we get that
$kC_G(Q)c\cong$ $B_Q$ and that $B_Q$ is a direct summand
of $B$ as an $N_{G\times G}(\Delta Q)$-module.
Moreover, $B(\Delta Q)\cong$ $B_Q$, 
and hence any complement $C_Q$ of $B_Q$ in $B$, as an 
$N_{G\times G}(\Delta Q)$-module, satisfies $C_Q(\Delta Q)=$ $\{0\}$.
\end{proof}

We will make use of the following well-known fact on transfer in
cohomology (we include a short proof for the convenience of the 
reader).

\begin{Lemma} \label{transferinduced}
Let $G$ be a finite group, $H$ a subgroup of $G$ and $V$ a
$kH$-module. Let $U$ be a direct summand of $\Ind^G_H(V)$.
Then $H^*(G; U) =$  $\tr^G_H(H^*(H; \Res^G_H(U)))$.
\end{Lemma}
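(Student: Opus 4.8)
The plan is to show that the corestriction (transfer) map $\tr^G_H : H^*(H;\Res^G_H(U))\to H^*(G;U)$ is surjective; since by construction its image is contained in $H^*(G;U)$, surjectivity is exactly the asserted equality. The one nontrivial inclusion will come from relative projectivity of $U$. Indeed, a direct summand of an induced module $\Ind^G_H(V)$ is relatively $H$-projective, so by Higman's criterion there is a $kH$-endomorphism $\phi$ of $\Res^G_H(U)$ whose relative trace $\tr^G_H(\phi)=\sum_{x\in[G/H]}{}^{x}\phi$ equals $\Id_U$ in $\End_{kG}(U)$, where ${}^{x}\phi(u)=x\phi(x^{-1}u)$. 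This identity is the device that lets one split off each cohomology class through $H$.

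To carry this out at the cochain level, I would fix a projective resolution $P_\bullet\to k$ of the trivial $kG$-module, so that $H^*(G;U)$ and $H^*(H;\Res^G_H(U))$ are computed by $\Hom_{kG}(P_\bullet,U)$ and $\Hom_{kH}(P_\bullet,U)$ respectively (the restriction of a projective $kG$-module to $kH$ is again projective), with transfer realised on cochains by $g\mapsto\sum_{x\in[G/H]}{}^{x}g$, where $({}^{x}g)(p)=x\,g(x^{-1}p)$. Given a cocycle $f\in\Hom_{kG}(P_n,U)$ representing a class $\zeta\in H^n(G;U)$, the composite $\phi\circ f$ is a $kH$-linear cocycle, and using the $kG$-linearity of $f$ together with Higman's identity one computes
$$\tr^G_H(\phi\circ f)(p)=\sum_{x\in[G/H]}x\,\phi\bigl(f(x^{-1}p)\bigr)=\sum_{x\in[G/H]}x\,\phi\bigl(x^{-1}f(p)\bigr)=\bigl(\tr^G_H(\phi)\bigr)(f(p))=f(p).$$
Thus $\tr^G_H(\phi\circ f)=f$ already on cochains, and passing to cohomology gives $\zeta=\tr^G_H([\phi\circ f])$, which proves surjectivity.

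I expect the only real subtlety to be bookkeeping rather than substance: one must check that the cochain formula $g\mapsto\sum_x{}^{x}g$ genuinely induces the cohomological transfer $\tr^G_H$ (it is a chain map landing in $kG$-linear cochains and is independent of the choice of coset representatives $[G/H]$), and that the sign conventions in the differential of the Hom-complex are consistent so that $\phi\circ f$ remains a cocycle. Once these standard points are in place the computation above is immediate. An alternative route would reduce, via additivity of cohomology and naturality of transfer along a splitting $\Ind^G_H(V)=U\oplus W$, to the case $U=\Ind^G_H(V)$ and then invoke the Eckmann--Shapiro isomorphism; but the Higman argument has the advantage of treating the summand $U$ directly and keeping the proof self-contained.
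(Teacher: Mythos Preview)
Your proof is correct and is essentially the same argument as the paper's: both invoke Higman's criterion to write $\Id_U=\tr^G_H(\phi)$ and then observe that a representative $\zeta$ of a class in $H^n(G;U)$ satisfies $\zeta=\Id_U\circ\zeta=\tr^G_H(\phi)\circ\zeta=\tr^G_H(\phi\circ\zeta)$. The only cosmetic difference is that the paper represents $\zeta$ as a $kG$-map $\Omega^n(k)\to U$ while you work with cocycles in $\Hom_{kG}(P_n,U)$, which amounts to the same computation.
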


\begin{proof} By Higman's criterion there is a $kH$-endomorphism
$\varphi$ of $U$ such that $\Id_U=$ $\tr^G_H(\varphi)$. Let
$n\geq 0$ and let $\zeta : \Omega^n(k)\to$ $U$ be a $kG$-homomorphism,
representing an element in $H^n(G;U)$. Then $\zeta=$ 
$\Id_U\circ\zeta=$ $\tr^G_H(\varphi\circ\zeta)$, whence the result.
\end{proof}

This is applied in the following situation:

\begin{Lemma} \label{blocktransferinduced}
Let $G$ be a finite group, $B$ a block algebra of $kG$ and
$P$ a defect group of $B$. We have $H^*(\Delta G;B)=$
$\tr^{\Delta G}_{\Delta P}(H^*(\Delta P; B))$.
\end{Lemma}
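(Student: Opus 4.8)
The plan is to reduce the statement to an application of Lemma \ref{transferinduced}. For that it suffices to show that $B$, regarded via the conjugation action as a $k\Delta G$-module, is relatively $\Delta P$-projective, i.e.\ that $B$ is isomorphic to a direct summand of $\Ind^{\Delta G}_{\Delta P}(\Res^{\Delta G}_{\Delta P}(B))$. Granting this, Lemma \ref{transferinduced} with $\Delta G$ and $\Delta P$ in the roles of $G$ and $H$, and with $U=B$, gives at once $H^*(\Delta G;B)=\tr^{\Delta G}_{\Delta P}(H^*(\Delta P;B))$, which is the assertion. So the entire content lies in proving the relative projectivity of $B$ as a $k\Delta G$-module.

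To establish this I would start from the defining property of a defect group. Setting $b=1_B$, the fact that $P$ is a defect group says that $B$ is isomorphic to a direct summand of $B\tenkP B$ as a $B$-$B$-bimodule, and hence as a $k(G\times G)$-module. Since $b$ is a central idempotent of $kG$, the bimodule $B\tenkP B$ is a direct summand of $kG\tenkP kG$; and there is a standard isomorphism of $k(G\times G)$-modules $kG\tenkP kG\cong\Ind^{G\times G}_{\Delta P}(k)$, the stabiliser of $1\ten 1$ in $G\times G$ being exactly $\Delta P$. Combining these, $B$ is isomorphic to a direct summand of $\Ind^{G\times G}_{\Delta P}(k)$ as a $k(G\times G)$-module; that is, $B$ is relatively $\Delta P$-projective over $G\times G$.

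The remaining, and most delicate, step is to descend from relative $\Delta P$-projectivity over $G\times G$ to relative $\Delta P$-projectivity over the subgroup $\Delta G$. I would do this by restricting to $\Delta G$ and applying the Mackey formula to $\Res^{G\times G}_{\Delta G}\Ind^{G\times G}_{\Delta P}(k)$. The key computation is that for $(x,y)\in G\times G$ one has $\Delta G\cap{}^{(x,y)}\Delta P=\Delta R$, where $R=x(P\cap C_G(y^{-1}x))x^{-1}\leq xPx^{-1}$; in particular $\Delta R$ is contained in the $\Delta G$-conjugate ${}^{(x,x)}\Delta P$ of $\Delta P$. Hence every Mackey summand is induced from a subgroup of a $\Delta G$-conjugate of $\Delta P$ and is therefore relatively $\Delta P$-projective as a $k\Delta G$-module; consequently so is the direct summand $\Res^{G\times G}_{\Delta G}(B)$. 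This yields the relative projectivity required in the first paragraph, and the lemma then follows from Lemma \ref{transferinduced}. The main obstacle is precisely this last descent: verifying that the intersections $\Delta G\cap{}^{(x,y)}\Delta P$ really are diagonal subgroups attached to $p$-subgroups subconjugate to $P$, so that relative $\Delta P$-projectivity is preserved upon restriction from $G\times G$ to $\Delta G$. Everything else is a formal assembly of the definition of the defect group, the identification $kG\tenkP kG\cong\Ind^{G\times G}_{\Delta P}(k)$, and Lemma \ref{transferinduced}.
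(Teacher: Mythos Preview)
Your proposal is correct and follows essentially the same route as the paper: show that $B$ is a direct summand of $\Ind^{G\times G}_{\Delta P}(k)$, then use Mackey's formula to conclude that $\Res^{G\times G}_{\Delta G}(B)$ is relatively $\Delta P$-projective, and finish by Lemma~\ref{transferinduced}. The paper merely asserts the Mackey step in one line, whereas you spell out the intersection $\Delta G\cap{}^{(x,y)}\Delta P=\Delta R$ explicitly; the paper also notes an alternative argument via writing $b=\Tr^{\Delta G}_{\Delta P}(y)$ for some $y\in B^{\Delta P}$.
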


\begin{proof} 
As a $k(G\times G)$-module, $B$ has vertex 
$\Delta P$ and trivial source, thus is isomorphic to a direct
summand of $\Ind^{G\times G}_{\Delta P}(k)$. Mackey's formula
shows that $\Res^{G\times G}_{\Delta G}(B)$ is still relatively
$\Delta P$-projective, hence lemma \ref{transferinduced} implies
the result. Alternatively, this follows from the fact that
$b=1_B$ can be written as a relative trace of the form
$b=$  $\Tr^{\Delta G}_{\Delta P}(y)$ for some $y\in$ $B^{\Delta P}$.
\end{proof}

\begin{Proposition} \label{HHsumtransfer}
Let $G$ be a finite group and $B$ be a block algebra of $kG$.
Set $b=$ $1_B$ and for every $B$-Brauer pair $(Q,e)$ set $B_{(Q,e)}=$
$kC_G(Q)eb$. Then $B_{(Q,e)}$ is a direct summand of $B$ as
a $k(C_G(Q)\times C_G(Q))\Delta Q$-module, isomorphic to
$kC_G(Q)e$. In particular, $H^*(\Delta Q;B_{(Q,e)})$ is a direct
summand, as a graded vector space, of $H^*(\Delta Q;B)$, and we have
$$H^*(\Delta G; B)= \sum_{(Q,e)}\ \ 
\tr^{\Delta G}_{\Delta Q}(H^*(\Delta Q; B_{(Q,e)}))$$
where in the sum $(Q,e)$ runs over a set of representatives of the
$G$-conjugacy classes of $B$-Brauer pairs.
\end{Proposition}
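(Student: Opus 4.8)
The plan is to establish the module-theoretic assertions first and then prove the displayed equality by a double inclusion, the harder direction resting on a downward induction over the $p$-subgroups of $G$. To begin, for a fixed $p$-subgroup $Q$ with $c=\Br_Q(b)\neq 0$, the idempotent $c$ of $kC_G(Q)$ decomposes as $c=\sum_e e$, where $e$ runs over the block idempotents of $kC_G(Q)$ satisfying $e\Br_Q(b)=e$, that is, over the $B$-Brauer pairs $(Q,e)$ with first component $Q$. Since every element of $Q$ commutes with every element of $C_G(Q)$, conjugation by $\Delta Q$ acts trivially on $kC_G(Q)$ and fixes $b=1_B$; hence $\Delta Q$ fixes each $e$ and stabilises each $B_{(Q,e)}=kC_G(Q)eb$. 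Together with the evident invariance under left and right multiplication by $C_G(Q)$, this shows that $B_Q=kC_G(Q)cb=\bigoplus_e B_{(Q,e)}$ is a direct sum of $k(C_G(Q)\times C_G(Q))\Delta Q$-submodules. Proposition \ref{blocksummands} identifies $B_Q$ as a direct summand of $B$ over $N_{G\times G}(\Delta Q)$ via multiplication by $b$, and restricting that isomorphism to the $e$-component yields $kC_G(Q)e\cong B_{(Q,e)}$ and the fact that each $B_{(Q,e)}$ is a direct summand of $B$ as a $k(C_G(Q)\times C_G(Q))\Delta Q$-module. Restricting further to $\Delta Q$ and using additivity of $H^*(\Delta Q;-)$ then gives that $H^*(\Delta Q;B_{(Q,e)})$ is a direct summand of $H^*(\Delta Q;B)$.

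Write $T$ for the right-hand side of the displayed equation. The inclusion $H^*(\Delta G;B)\supseteq T$ is immediate: each $B_{(Q,e)}$ is a $k\Delta Q$-direct summand of $B$, so $H^*(\Delta Q;B_{(Q,e)})$ embeds as a summand of $H^*(\Delta Q;B)$, and applying $\tr^{\Delta G}_{\Delta Q}$ lands inside $H^*(\Delta G;B)$. For the reverse inclusion I would prove, by induction on $|Q|$ ranging over all $p$-subgroups $Q$ of $G$, that $\tr^{\Delta G}_{\Delta Q}(H^*(\Delta Q;B))\subseteq T$. Applying this to a defect group $P$ and invoking Lemma \ref{blocktransferinduced}, which gives $H^*(\Delta G;B)=\tr^{\Delta G}_{\Delta P}(H^*(\Delta P;B))$, then forces $H^*(\Delta G;B)\subseteq T$ and hence equality.

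The inductive step runs as follows. Restricting $B$ to $\Delta Q$, Proposition \ref{blocksummands} (or the trivial decomposition $B_Q=\{0\}$, $C_Q=B$ when $\Br_Q(b)=0$, in which case $C_Q(\Delta Q)=B(\Delta Q)=\{0\}$) gives $\Res^{G\times G}_{\Delta Q}(B)=B_Q\oplus C_Q$ with $B_Q=\bigoplus_e B_{(Q,e)}$ and $C_Q(\Delta Q)=\{0\}$. The summand $B_Q$ contributes $\tr^{\Delta G}_{\Delta Q}(H^*(\Delta Q;B_Q))=\sum_e \tr^{\Delta G}_{\Delta Q}(H^*(\Delta Q;B_{(Q,e)}))$, which consists of terms of $T$. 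For the summand $C_Q$, the key point is that $C_Q$, being a direct summand of the $p$-permutation $k\Delta Q$-module $\Res^{G\times G}_{\Delta Q}(B)$, is itself a $p$-permutation module, and $C_Q(\Delta Q)=\{0\}$ forces every indecomposable summand of $C_Q$ to have vertex properly contained in $\Delta Q$. Hence $C_Q$ is relatively projective with respect to the proper subgroups of $\Delta Q$, and a Higman-type trace argument as in the proof of Lemma \ref{transferinduced} gives $H^*(\Delta Q;C_Q)=\sum_{R<Q}\tr^{\Delta Q}_{\Delta R}(H^*(\Delta R;C_Q))$. Applying $\tr^{\Delta G}_{\Delta Q}$ and using transitivity of transfer turns this into $\sum_{R<Q}\tr^{\Delta G}_{\Delta R}(H^*(\Delta R;C_Q))\subseteq \sum_{R<Q}\tr^{\Delta G}_{\Delta R}(H^*(\Delta R;B))$, since $C_Q$ is a $\Delta R$-summand of $B$; by the induction hypothesis each of these lies in $T$. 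Combining the two contributions completes the step, and the base case $Q=1$ is covered directly, as there $B_1=B=B_{(1,b)}$ and $C_1=\{0\}$.

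The step I expect to be the main obstacle is the passage from $C_Q(\Delta Q)=\{0\}$ to the relative projectivity of $C_Q$ with respect to proper subgroups and the resulting generation of $H^*(\Delta Q;C_Q)$ by transfers: this requires knowing that $C_Q$ is a $p$-permutation module, so that vanishing of the Brauer quotient is equivalent to the absence of summands of full vertex, together with a Higman-criterion argument for a family of subgroups rather than a single one. A secondary point to handle with care is the bookkeeping over $G$-conjugacy classes of Brauer pairs, ensuring that the terms produced for the various $Q$ and $R$ — including those $G$-conjugate to the chosen representatives — are genuinely absorbed into $T$, using that the image of a transfer depends only on the conjugacy class of the source.
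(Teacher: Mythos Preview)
Your proof is correct and follows the same architecture as the paper's: the easy containment, then an induction on $|Q|$ showing $\tr^{\Delta G}_{\Delta Q}(H^*(\Delta Q;B))\subseteq T$ via the decomposition $B=B_Q\oplus C_Q$ from Proposition~\ref{blocksummands}, handling $C_Q$ through its relative projectivity with respect to proper subgroups and invoking Lemma~\ref{blocktransferinduced} at $Q=P$ to conclude. The two points you flag as potential obstacles are exactly the ones the paper dispatches in the same way: the passage from $C_Q(\Delta Q)=\{0\}$ to transfer-generation is attributed to Lemma~\ref{transferinduced} (applied summand-by-summand, which is precisely your $p$-permutation/vertex argument made explicit), and the conjugacy bookkeeping is handled by first enlarging the target to $\sum_Q \tr^{\Delta G}_{\Delta Q}(H^*(\Delta Q;B_Q))$ over \emph{all} $Q$ with $\Br_Q(b)\neq 0$, noting these terms depend only on $G$-conjugacy.
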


\begin{proof}
The proof adapts techniques that have been used in the proof 
of a result of Watanabe \cite[Lemma 1]{Wat}.
Clearly $H^*(\Delta G;B)$ contains the right side in the displayed
equation. We need to show that $H^*(\Delta G;B)$ is contained
in the right side. Since any summand of the right side of the
form $\tr^{\Delta G}_{\Delta Q}(H^*(\Delta Q; B_{(Q,e)})$ 
depends only on the $G$-conjugacy class of $(Q,e)$ it
suffices to prove the inclusion
$$H^*(\Delta G;B) \subseteq \sum_Q
\tr^{\Delta G}_{\Delta Q}(H^*(\Delta Q; B_Q))$$
where $Q$ runs over the $p$-subgroups of $G$ for which
$\Br_Q(b)\neq 0$. Note that this makes sense since $B_Q$ is
a direct summand of $B$ as a $k\Delta Q$-module, hence
$H^*(\Delta Q; B_Q)$ is a subspace of $H^*(\Delta Q; B)$, to which
we then apply the transfer map $\tr^{\Delta G}_{\Delta Q}$.
Since $H^*(\Delta G;B)=$ $\tr^{\Delta G}_{\Delta P}(H^*(\Delta P;B))$
by lemma \ref{blocktransferinduced} it suffices to show that
the right side contains $\tr^{\Delta G}_{\Delta R}(H^*(\Delta R;B))$
for any $p$-subgroup $R$ of $G$. This will be shown by induction.
For $R=$ $\{1\}$ this holds trivially because $B_{\{1\}}=$ $B$
and $C_{\{1\}}=$ $\{0\}$. For $R\neq\{1\}$ we have a direct sum
decomposition $B=$ $B_R\oplus C_R$ of $kN_{G\times G}(\Delta R)$-modules
as in proposition \ref{blocksummands}, and hence
$$H^*(\Delta R;B) = H^*(\Delta R;B_R) + H^*(\Delta R;C_R)$$
Since $C_R(\Delta R)=$ $\{0\}$ we have
$$H^*(\Delta R;C_R)\subseteq \sum_{S; S<R}\ \ 
\tr^{\Delta R}_{\Delta S}(H^*(\Delta S;B))$$
by lemma \ref{transferinduced}.
Applying the transfer map $\tr^{\Delta G}_{\Delta R}$ yields
$$\tr^{\Delta G}_{\Delta R}(H^*(\Delta R;C_R))\subseteq
\sum_{S; S<R}\ \ \tr^{\Delta G}_{\Delta S}(H^*(\Delta S;B))$$
hence
$$\tr^{\Delta G}_{\Delta R}(H^*(\Delta R; B)) \subseteq
\tr^{\Delta G}_{\Delta R}(H^*(\Delta R; B_R)) +
\sum_{S; S<R}\ \ \tr^{\Delta G}_{\Delta S}(H^*(\Delta S;B))$$
The result follows by induction.
\end{proof}

\begin{Lemma} \label{HHQe}
Let $G$ be a finite group and $B$ be a block algebra of $kG$.
Set $b=$ $1_B$ and for every $B$-Brauer pair $(Q,e)$ set $B_{(Q,e)}=$
$kC_G(Q)eb$. For any integer $n\geq 0$ we have
$$\dim_k(\tr_{\Delta Q}^{\Delta G}(H^n(\Delta Q; B_{(Q,e)})))\leq 
\dim_k(H^n(\Delta QC_G(Q); kQC_G(Q)e))$$
\end{Lemma}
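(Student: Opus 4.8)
The plan is to factor the transfer $\tr^{\Delta G}_{\Delta Q}$ through the intermediate diagonal subgroup $\Delta L$, where $L = QC_G(Q)$, and then to enlarge the coefficient module from $B_{(Q,e)}$ first to $kC_G(Q)e$ and then to $kLe$, controlling dimensions at each step by two elementary facts: a $k$-linear map cannot increase dimension, and the cohomology of a direct summand is a direct summand of the cohomology.

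Write $C = C_G(Q)$ and $L = QC$, so that $C\trianglelefteq L$ and $\Delta Q\le \Delta L\le \Delta G$, and note that $\Delta L\le (C\times C)\Delta Q$. First I would record the module-theoretic input from Proposition \ref{HHsumtransfer}: the summand $B_{(Q,e)}$ of $B$ as a $(C\times C)\Delta Q$-module is, upon restriction along $\Delta L\le (C\times C)\Delta Q$, a direct summand of $\Res^{\Delta G}_{\Delta L}(B)$, isomorphic as a $\Delta L$-module (with $\Delta L$ acting by conjugation) to $kCe$. Thus $H^n(\Delta L; B_{(Q,e)})$ is identified with $H^n(\Delta L; kCe)$ and sits inside $H^n(\Delta L; \Res^{\Delta G}_{\Delta L}(B))$.

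Next, using transitivity of transfer I would write $\tr^{\Delta G}_{\Delta Q} = \tr^{\Delta G}_{\Delta L}\circ\tr^{\Delta L}_{\Delta Q}$. Since $B_{(Q,e)}$ is a $\Delta L$-summand of $B$, the image $\tr^{\Delta L}_{\Delta Q}(H^n(\Delta Q; B_{(Q,e)}))$ is contained in $H^n(\Delta L; B_{(Q,e)})\cong H^n(\Delta L; kCe)$, and because $\tr^{\Delta G}_{\Delta L}$ is $k$-linear its subsequent application cannot increase dimension. This yields
\[ \dim_k \tr^{\Delta G}_{\Delta Q}(H^n(\Delta Q; B_{(Q,e)})) \le \dim_k H^n(\Delta L; kCe). \]

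The remaining, and in my view main, step is to compare $H^n(\Delta L; kCe)$ with $H^n(\Delta L; kLe)$, i.e. to show that the coefficient module really may be enlarged from $kCe$ to $kLe$. Here I would exhibit $kCe$ as a direct summand of $kLe$ as a $\Delta L$-module for the conjugation action. Because $C$ is normal in $L$ and $e$ is fixed under $L$-conjugation (conjugation by $Q$ is trivial on $kC$, as $Q$ centralizes $C$, while conjugation by $C$ is inner and hence fixes the central idempotent $e$), the linear projection $kL\to kC$ along $\bigoplus_{x\in L\setminus C} kx$ is equivariant for $L$-conjugation and restricts to a $\Delta L$-equivariant retraction $kLe\to kCe$ of the inclusion. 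Consequently $H^n(\Delta L; kCe)$ is a direct summand of $H^n(\Delta L; kLe)$, so $\dim_k H^n(\Delta L; kCe)\le \dim_k H^n(\Delta L; kLe)$. Combining this with the previous inequality and recalling $H^n(\Delta L; kLe) = H^n(\Delta QC_G(Q); kQC_G(Q)e)$ gives the claim. I expect the genuine subtlety to lie entirely in this last enlargement — the normality of $C$ in $L$ together with the $L$-stability of $e$ — whereas the transfer bookkeeping in the earlier steps is routine.
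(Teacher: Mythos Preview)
Your proof is correct and follows essentially the same route as the paper: factor the transfer through $\Delta L$ with $L=QC_G(Q)$, bound by $\dim_k H^n(\Delta L; kC_G(Q)e)$, and then use that $kC_G(Q)e$ is a $\Delta L$-direct summand of $kQC_G(Q)e$. The paper simply asserts this last direct-summand fact in one line, whereas you spell out the (routine) reason via normality of $C_G(Q)$ in $L$ and the coset projection; so the step you flag as the ``genuine subtlety'' is in fact the step the paper regards as immediate.
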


\begin{proof} Clearly 
$\dim_k(\tr_{\Delta Q}^{\Delta G}(H^n(\Delta Q; B_{(Q,e)})))\leq$ 
$\dim_k(\tr_{\Delta Q}^{\Delta QC_G(Q)}(H^n(\Delta QC_G(Q); B_{(Q,e)})))$.
Moreover, since $B_{(Q,e)}\cong$ $kC_Q(Q)e$ is isomorphic to a 
direct summand of $kQC_G(Q)e$, the lemma follows.
\end{proof}

\begin{Lemma} \label{HHZ}
Let $G$ be a finite group, $B$ a block of $kG$ and $Z$ a
subgroup of order $p$ of $Z(G)$. Set $\bar G=$ $G/Z$ and
denote by $\bar B$ the image of $B$ in $k\bar G$ under the
canonical algebra homomorphism $kG\to$ $k\bar G$. For
any integer $n\geq 0$ we have
$$\dim_k(H^n(\Delta G; B))\leq
p\cdot\sum_{i=0}^{n}\ \dim_k(H^i(\Delta \bar G; \bar B))$$
\end{Lemma}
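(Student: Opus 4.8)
The plan is to compare $\dim_k H^n(\Delta G;B)$ with the numbers $\dim_k H^i(\Delta\bar G;\bar B)$ in two steps, using as an intermediate object the cohomology $H^*(\Delta\bar G;B)$ of the \emph{quotient} diagonal group with coefficients in the \emph{original} bimodule $B$. The central $p$-subgroup $Z$ intervenes twice: once to replace the acting group $\Delta G$ by $\Delta\bar G$, and once to replace the coefficient module $B$ by its image $\bar B$.

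For the first step I would use that, since $Z$ is central, $\Delta Z$ acts trivially on $B$ by conjugation, so that the $\Delta G$-action on $B$ factors through $\Delta\bar G=\Delta G/\Delta Z$. Consider the Lyndon--Hochschild--Serre spectral sequence of the extension $1\to\Delta Z\to\Delta G\to\Delta\bar G\to 1$, namely $E_2^{i,j}=H^i(\Delta\bar G;H^j(\Delta Z;B))\Rightarrow H^{i+j}(\Delta G;B)$. Because $Z$ is central of order $p$ and acts trivially on $B$, one has $H^j(\Delta Z;B)\cong H^j(Z;k)\otimes_k B\cong B$ as $\Delta\bar G$-modules for every $j\geq 0$ (the conjugation action of $\bar G$ on $H^j(Z;k)$ is trivial and $H^j(Z;k)\cong k$ since $Z\cong\Z/p$). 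Hence $E_2^{i,j}\cong H^i(\Delta\bar G;B)$ for all $i,j$, and since $H^n(\Delta G;B)$ is filtered by subquotients of the $E_2^{i,j}$ with $i+j=n$, I obtain $\dim_k H^n(\Delta G;B)\leq\sum_{i=0}^n\dim_k H^i(\Delta\bar G;B)$.

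For the second step I would exploit that $kZ\cong k[x]/(x^p)$ with $x=z-1$ (for a generator $z$ of $Z$) is a local ring over which $B$ is free: indeed $kG$ is free over the central subalgebra $kZ$, and the direct factor $B$ is therefore projective, hence free, over $kZ$. Consequently the chain $B\supseteq xB\supseteq\cdots\supseteq x^{p-1}B\supseteq x^pB=0$ is a filtration of $B$ by $\Delta\bar G$-submodules (multiplication by the central element $x$ commutes with conjugation), whose $p$ successive subquotients $x^iB/x^{i+1}B$ are each isomorphic, via multiplication by $x^i$, to $B/xB\cong\bar B$ as $\Delta\bar G$-modules. Subadditivity of $\dim_k H^i(\Delta\bar G;-)$ along this filtration then gives $\dim_k H^i(\Delta\bar G;B)\leq p\cdot\dim_k H^i(\Delta\bar G;\bar B)$ for each $i$, and combining with the first step yields the asserted bound.

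The main obstacle is not the homological bookkeeping but the careful identification of module structures: verifying that the $E_2$-coefficients $H^j(\Delta Z;B)$ really are isomorphic to $B$ as $\Delta\bar G$-modules (and not merely as vector spaces), and that $\bar B$ --- the image of $B$ under $kG\to k\bar G$, which is a block of $k\bar G$ because the kernel $xkG$ is nilpotent and central --- is exactly $B/xB$ with its conjugation action. Once these identifications are in place, both inequalities are instances of the general fact that the dimension of cohomology in a fixed degree is subadditive along short exact sequences (equivalently, along filtrations), and the two factors $\sum_{i=0}^n$ and $p$ arise respectively from the two uses of $Z$.
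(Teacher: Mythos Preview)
Your proof is correct and follows essentially the same approach as the paper: the Lyndon--Hochschild--Serre spectral sequence for $\Delta Z\trianglelefteq\Delta G$ together with $H^j(\Delta Z;B)\cong B$ gives the first inequality, and the $(1-z)$-filtration of $B$ (with successive quotients $\bar B$ by freeness of $B$ over $kZ$) gives the factor $p$. Your attention to the $\Delta\bar G$-module identifications is well placed and matches what the paper uses implicitly.
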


\begin{proof}
The Lyndon-Hochschild-Serre spectral sequence associated with
$G$, $Z$, $\bar G$ and $B$ endowed with the conjugation action
of $G$ reads
$$H^i(\Delta\bar G; H^j(\Delta Z; B)) \Rightarrow H^{i+j}(\Delta G; B)$$
Since $\Delta Z$ acts trivially on $kG$, hence on $B$, 
we have $H^j(\Delta Z;B)\cong$ $H^j(\Delta Z;k)\tenk B\cong$ $B$,
where the last isomorphism uses that we have
$H^j(\Delta Z;k)\cong$ $k$ because $Z$ is cyclic. 
Thus $H^n(\Delta G;B)$ is filtered by
subquotients of $H^i(\Delta\bar G; B))$, with $0\leq i\leq n$; in
particular, 
$\dim_k(H^n(\Delta G; B))\leq$
$\sum_{i=0}^{n}\ \dim_k(H^i(\Delta \bar G; B))$.
Let $z$ be a generator of $Z$. As a $k\Delta \bar G$-module,
$B$ has a filtration of the form
$$B\supseteq B(1-z)\supseteq B(1-z)^2\supseteq
\cdots\supseteq B(1-z)^{p-1}\supseteq \{0\}$$
and since $B$ is projective as a right $kZ$-module, the
quotient of any two consecutive terms in this filtration
is isomorphic to $\bar B$. Thus the appropriate long exact
sequences in cohomology imply that
$\dim_k(H^i(\Delta \bar G; B))\leq$
$p\cdot\dim_k(H^i(\Delta \bar G; \bar B))$, whence the result.
\end{proof}

\begin{proof}[Proof of Theorem \ref{HHbound}]
Let $f$ be the function defined in remark \ref{fdef}.
Note that $f(n,d)\geq$ $f(n,d-1)$ for all $n\geq 0$
and all $d>0$. Denote by $c(d)$ the maximum 
of the numbers of subgroups in finite groups of order $p^d$.
As mentioned before, theorem \ref{HHbound} holds for $n=0$.
Clearly theorem \ref{HHbound} holds  
for $d=0$ because a defect zero block is a matrix algebra.
Let $n$ and $d$ be a positive integers. 
Then $\tr_{\Delta 1}^{\Delta G}(H^n(1; B))=$ $\{0\}$. Thus, by
proposition \ref{HHsumtransfer} and lemma \ref{HHQe}
we have $\dim_k(HH^n(B))\leq$ $\sum_{(Q,e)}\ \dim_k(HH^n(QC_G(Q)e))$
where in the sum $(Q,e)$ runs over a set of representatives
of the $G$-conjugacy classes of non-trivial $B$-Brauer pairs.
Any such pair $(Q,e)$ has a conjugate with $Q$ contained in a fixed
defect group $P$, and hence the number of summands in this
sum is at most $c(d)$. Moreover, $Z(QC_G(Q))$ contains $Z(Q)$, and
hence $QC_G(Q)$ has a non-trivial central subgroup $Z_Q$
of order $p$. After replacing $(Q,e)$ by a suitable $G$-conjugate,
we may assume that $QC_P(Q)$ is a defect group of $e$ viewed as
a block of $kQC_G(Q)$; in particular the defect groups of $e$
have order at most $|P|=$ $p^d$. Thus the defect groups of the image
$\bar e$ of $e$ in $kQC_G(Q)/Z_Q$ have order at most $|P|/p=$ $p^{d-1}$,
hence $\dim_k(HH^n(kQC_G(Q)/Z_Q\bar e))\leq$ $f(n,d-1)$.
It follows from lemma \ref{HHZ} that $\dim_k(HH^n(kQC_G(Q)e))\leq$
$p\cdot\sum_{i=0}^n\ f(i,d-1)$. Together with the above remarks
we get the inequality $\dim_k(HH^n(B)\leq$ 
$p\cdot c(d)\cdot\sum_{i=0}^n\ f(i,d-1)=$ $f(n,d)$, as required. 
\end{proof}

\begin{Remark}
The strong version of Donovan's conjecture states that 
for a fixed integer $d\geq 0$ there  should be only finitely 
many Morita equivalence classes of blocks with defect at most 
$d$. If true, this would imply that there are only finitely 
many isomorphism classes of Hochschild cohomology algebras of 
blocks with defect at most $d$; this remains an open problem.
\end{Remark}

\bigskip

\end{document}